\newtheorem{theorem}{Theorem}
\newtheorem{proposition}[theorem]{Proposition}
\theoremstyle{definition}
\theoremstyle{remark}
\newtheorem{remark}[theorem]{Remark}
\newcommand{\R}{\mathbb{R}}
\newcommand{\dd}{\,\mathrm{d}}
\renewcommand{\d}{\mathrm{d}}
\renewcommand{\P}{\mathbb{P}}
\newcommand{\E}{\mathbb{E}}
\newcommand{\FFF}{\mathcal{F}}
\title{Validity and efficiency of the conformal CUSUM procedure}
\author{Vladimir Vovk, Ilia Nouretdinov, and Alex Gammerman}
\begin{document}
\maketitle
\begin{abstract}
  In this paper we study the validity and efficiency of a conformal version
  of the CUSUM procedure for change detection both experimentally and theoretically.

  The version of this paper at \url{http://alrw.net} (Working Paper 41)
  is updated most often.
\end{abstract}

\section{Introduction}

One of the most popular methods for change detection
is Page's \cite{Page:1954} CUSUM procedure;
see \cite[Chap.~6]{Poor/Hadjiliadis:2009} for a modern exposition.
The CUSUM procedure has been shown to be optimal in some important senses,
including Lorden's \cite{Lorden:1971,Moustakides:1986} worst-case definition
and Ritov's \cite{Ritov:1990} game-theoretic definition.

The basic setting in which the optimality of CUSUM has been shown
is where the pre-change and post-change distributions are known,
and the only unknown is the time when the change happens (the changepoint).
The conformal CUSUM procedure,
introduced in \cite{Volkhonskiy/etal:2017COPA}
and discussed in detail in \cite[Chap.~8]{Vovk/etal:2022book},
only assumes that the pre-change data distribution is IID
(the pre-change observations are independent and identically distributed).

In \cite[Chap.~8]{Vovk/etal:2022book}
we only discussed the \emph{validity} of the conformal CUSUM procedure,
meaning the frequency of false alarms
(i.e., alarms raised \emph{before} the changepoint)
being bounded (at least with a high probability) by a specified constant.
Another desideratum for the conformal CUSUM procedure
is its \emph{efficiency}:
we would like an alarm to be raised soon \emph{after} the changepoint.
We did not investigate the efficiency of our change detection procedures
in \cite{Vovk/etal:2022book} since the task of proving efficiency
is in a sense too ambitious \cite[Sect.~8.6.3]{Vovk/etal:2022book}:
conformal testing may be based on powerful algorithms, such as deep neural networks,
involving elements of intelligence,
and there is little hope of guaranteeing their efficiency.

In this paper we investigate the efficiency of conformal change detection
in a very roundabout way,
which is an instance of what we called the Burnaev--Wasserman programme
in \cite[Sect.~2.5]{Vovk/etal:2022book}.
Suppose we model our observations as coming in the IID fashion from a distribution $Q_0$
before the changepoint and from another distribution $Q_1$ after the changepoint.
This is, however, a ``soft model'':
we do not want the validity of our procedure to depend on it.
The soft model provides us with a means of establishing the efficiency of our procedure:
the efficiency is measured under this model.

We started implementing this programme in the easier case of conformal e-testing
in \cite{Vovk/etal:arXiv2006v2}.
In this paper we extend it to conformal testing,
which has important advantages over conformal e-testing
\cite[Sect.~7]{Vovk/etal:arXiv2006v2}.

We start the main part of this paper from a description of the conformal CUSUM procedure
in Sect.~\ref{sec:CUSUM}.
The conformal CUSUM procedure is determined by a threshold and an underlying conformal test martingale,
and the latter is in turn determined by its underlying nonconformity measure and betting martingale.
We define asymptotically optimal, in a natural sense,
nonconformity measure and betting martingale in Sect.~\ref{sec:optimal}.
The betting martingale consists in applying the same betting function at each step,
and the betting function is explicitly computed in two popular special cases
(both involving the Gaussian model)
in Sect.~\ref{sec:special}.
An experimental section, Sect.~\ref{sec:experiments},
illustrates the efficiency of the asymptotically optimal conformal CUSUM procedure
that we derive in the previous sections,
and Sect.~\ref{sec:theory} is an attempt of a theoretical analysis.
Finally, we summarize and list some directions of further research in Sect.~\ref{sec:conclusion}.

In this paper we will refer to the assumption that the observations are IID
as the assumption of randomness.
By de Finetti's theorem this assumption is equivalent to that of exchangeability
for observations taking values in a standard Borel space.

\section{Conformal and standard CUSUM procedures}
\label{sec:CUSUM}

Let $S$ be a conformal test martingale (CTM) as defined in \cite[Sect.~8.1.2]{Vovk/etal:2022book};
for simplicity we will assume that it is positive, $S>0$.
For a given threshold $c>1$, the \emph{conformal CUSUM procedure}
raises the $k$th alarm, $k=0,1,\dots$, at the time
\begin{equation}\label{eq:CUSUM}
  \tau_k
  :=
  \min
  \left\{
    n>\tau_{k-1}:
    \max_{i=\tau_{k-1}\dots,n-1}
    \frac{S_n}{S_i}
    \ge
    c
  \right\},
\end{equation}
where $\tau_0:=0$ and $\min\emptyset:=\infty$.
A universally applicable but conservative property of validity of the conformal CUSUM procedure
is that
\begin{equation}\label{eq:validity-1}
  \E(\tau_k-\tau_{k-1})\ge c
\end{equation}
for IID observations.
This implies (albeit not trivially) that, under randomness,
\begin{equation}\label{eq:validity-2}
  \limsup_{n\to\infty}
  \frac{A_n}{n}
  \le
  \frac{1}{c}
  \quad
  \text{a.s.},
\end{equation}
where $A_n:=\max\{k:\tau_k\le n\}$ is the number of alarms raised by the conformal CUSUM procedure
by time $n$.

Next, with a view towards the Burnaev--Wasserman programme,
we consider the standard CUSUM procedure.
Let us use the notation of \cite{Vovk/etal:arXiv2006v2}:
the observation space is $\mathbf{Z}$ (a measurable space),
the pre-change distribution is $Q_0$,
and the post-change distribution is $Q_1$.
Let $f_0$ be the probability density of the pre-change distribution $Q_0$
and $f_1$ be the probability density of the post-change distribution $Q_1$
with respect to some $\sigma$-finite measure $m$ (such as $m:=Q_0+Q_1$).
The \emph{likelihood ratio} is
\begin{equation}\label{eq:LR}
  L(z)
  :=
  \frac{f_1(z)}{f_0(z)},
  \quad
  z\in\mathbf{Z};
\end{equation}
for simplicity we will assume that both $f_0$ and $f_1$ are positive, so that $L\in(0,\infty)$.
The corresponding \emph{likelihood ratio martingale} (LRM) is
\begin{equation}\label{eq:LRM}
  S_n
  :=
  \prod_{i=1}^n
  L(z_i),
  \quad
  n=0,1,\dots.
\end{equation}
The \emph{standard CUSUM procedure} corresponding to a threshold $c>1$
and the pre-/post-change distributions $(Q_0,Q_1)$
is defined by \eqref{eq:CUSUM},
where $S$ is the LRM \eqref{eq:LRM}.

The standard CUSUM procedure satisfies the properties of validity,
\eqref{eq:validity-1} and \eqref{eq:validity-2},
that we mentioned earlier for the conformal CUSUM procedure.
As we also mentioned earlier,
these properties of validity are very conservative for CUSUM
(since, as shown in, e.g., \cite[Chap.~4]{Vovk/etal:2022book},
they are also applicable to the Shiryaev--Roberts procedure, which raises many more alarms).
A possible solution is to use computer simulations
to find a suitable value of the threshold $c$
for a given target value for the frequency of false alarms
(as in \cite[Sect.~8.4.3]{Vovk/etal:2022book},
where we also find confidence intervals for the frequency of false alarms).
Many important pairs $(Q_0,Q_1)$ of pre-/post-change distributions are even amenable to theoretic analysis,
and it is possible for them to derive very accurate approximations to the value of $c$
leading to a chosen ``ARL2FA'' (``average run length to false alarm''),
a standard measure of validity for the standard CUSUM procedure.
In particular,
analysis of a discrete-time Gaussian case is performed in \cite{Goel/Wu:1971}
(and described in \cite[Sect.~5.2.2.1]{Basseville/Nikiforov:1993};
see also \cite[Sect.~8.2.6.6]{Tartakovsky/etal:2015}).

We would like to construct a CTM
approaching in its validity and efficiency the LRM
when both are plugged into the CUSUM scheme \eqref{eq:CUSUM}.
In the next section we will see that in the case of validity
this goal can achieved perfectly
(see Proposition~\ref{prop:validity}).
In the case of efficiency, our results are much weaker
(Sections~\ref{sec:experiments} and~\ref{sec:theory}).

\section{Asymptotically optimal CTM}
\label{sec:optimal}

First we briefly give a general definition,
which we then specialize to the more intuitive case of a finite observation space $\mathbf{Z}$.
This specialization will allow us to justify our definition in a simple setting.

An \emph{asymptotically optimal betting function} $f:[0,1]\to[0,\infty)$
for the pair $(Q_0,Q_1)$ of the pre-/post-change distributions
is defined by the condition
\[
  Q_0(\{z:L(z)>f(p)\})
  \le
  p
  \le
  Q_0(\{z:L(z)\ge f(p)\}),
\]
required to hold for any $p\in[0,1]$.
In other words, it is defined to be an inverse function of the \emph{survival function}
$\bar F(t):=Q_0(\{z:L(z)>t\})$ of the likelihood ratio $L$ under $Q_0$
\cite[Definition A.18]{Follmer/Schied:2016}.
This function is always decreasing (not necessarily strictly decreasing).

The \emph{canonical asymptotically optimal} (CAO) betting function $f$
is defined to be the largest function in this class, i.e.,
\[
  f(p)
  :=
  \inf\{t:\bar F(t)<p\}
  =
  \sup\{t:\bar F(t)\ge p\}.
\]
In other words, the CAO betting function is the left-continuous inverse function
of the survival function $\bar F$
(adapt \cite[Lemma A.19]{Follmer/Schied:2016} to the case of a decreasing function).
Equivalently,
the CAO betting function can be defined by $f(p)=F^{-1}(1-p)$,
where $F^{-1}$ is the upper quantile function
of the cumulative distribution function $F$ of $L$ under $Q_0$.

Let us now assume that the observation space $\mathbf{Z}$ is finite;
we will drop the curly braces in the notation such as $Q_0(\{z\})$ for $z\in\mathbf{Z}$.
To specialize the general definition to the case of a finite $\mathbf{Z}$,
sort the likelihood ratios
\[
  L(z)
  =
  Q_1(z)/Q_0(z)
\]
(cf.\ \eqref{eq:LR}) in the ascending order.
Let the resulting unique values for $L(z)$ be $l_1<\dots<l_K$. 
Set $q_k:=Q_0(\{z:L(z)=l_{k}\})$ for $k=1,\dots,K$.
An asymptotically optimal betting function $f:[0,1]\to[0,\infty)$
is defined by $f(p):=l_k$ for any $p\in[0,1]$,
where $k$ is defined by the condition
\begin{equation}\label{eq:betting}
  q_{k+1}+\dots+q_{K}
  \le
  p
  \le
  q_k\dots+q_{K}
\end{equation}
(this condition does not define $k$ uniquely if $p$ has the form $q_j+\dots+q_K$
for some $j\in\{2,\dots,K\}$,
but the probability of a uniformly distributed $p\in[0,1]$ having this form is zero).

Let us check that the term ``asymptotically optimal'' is indeed justified.
Let $N_0$ be the number of pre-change observations.
Consider a very large $N_0$ (so that we are talking about the asymptotics $N_0\to\infty$)
and a fixed $n$ (or $n$ bounded above by a fixed constant).
Take the likelihood ratio $L(z)$ as the nonconformity score of an observation $z$.
The conformal p-value at the $(N_0+n)$th step is
\begin{equation}\label{eq:conformal-p-value}
  p_{N_0+n}
  =
  \frac{
    \left|\left\{i:L_i>L_{N_0+n}\right\}\right|
    +
    \tau
    \left|\left\{i:L_i=L_{N_0+n}\right\}\right|
  }{N_0+n}
  \approx
  q_{k+1}+\dots+q_K+\tau q_k,
\end{equation}
where $i$ ranges over $\{1,\dots,N_0+n\}$, $\tau\in[0,1]$ is a random number,
and $k$ is defined by the condition $l_k=L_{N_0+n}$.
Since $n$ is fixed and $N_0\to\infty$, we can assume that $i$ ranges over $\{1,\dots,N_0\}$
and replace $N_0+n$ by $N_0$ in the denominator of \eqref{eq:conformal-p-value}.
The approximate equality ``$\approx$'' in \eqref{eq:conformal-p-value}
then follows from the law of large numbers (with a large probability).
The asymptotically optimal betting function takes value $l_k$
at this p-value by its definition (see \eqref{eq:betting}),
as it should (if we regards the LRM as being optimal).

An \emph{asymptotically optimal CTM} is a CTM
based on an asymptotically optimal betting function $f$
and on likelihood ratio $L$ as nonconformity measure.
It is the \emph{CAO CTM} if $f$ is the CAO betting function.
The following proposition says that the conformal CUSUM procedure
based on the CAO CTM, or any other asymptotically optimal CTM, is perfectly valid.

\begin{proposition}\label{prop:validity}
  For any pair $(Q_0,Q_1)$ of pre-/post-change distributions,
  the distribution of the likelihood ratio martingale under $Q_0^{\infty}$
  coincides with the distribution of any asymptotically optimal CTM
  under randomness (not necessarily under $Q_0^{\infty}$).
\end{proposition}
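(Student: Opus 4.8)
The plan is to write both processes as products of independent and identically distributed factors and then to match the common law of a single factor. First I would invoke the foundational validity property of conformal testing: under randomness, that is, for \emph{any} exchangeable distribution of the observations, the successive smoothed conformal p-values $p_1,p_2,\dots$ (as in \eqref{eq:conformal-p-value}, with its auxiliary random $\tau$) are independent and uniformly distributed on $[0,1]$. This is precisely what makes the distribution of the resulting CTM independent of the underlying exchangeable distribution, and it accounts for the parenthetical ``not necessarily under $Q_0^{\infty}$''. Since an asymptotically optimal CTM is $S_n=\prod_{i=1}^{n}f(p_i)$ for an asymptotically optimal betting function $f$, under randomness it is a product of the i.i.d.\ factors $f(p_i)$, while the LRM \eqref{eq:LRM} is a product of the i.i.d.\ factors $L(z_i)$ with $z_i\sim Q_0$.

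The whole statement therefore reduces to the single marginal identity: for $U$ uniform on $[0,1]$ and $Z\sim Q_0$ one has $f(U)\overset{\mathrm{d}}{=}L(Z)$. This is the probability integral (quantile) transform. By definition $f$ is a generalized inverse of the survival function $\bar F(t)=Q_0(\{z:L(z)>t\})$, equivalently $f(p)=F^{-1}(1-p)$ for the upper quantile function $F^{-1}$ of $L$ under $Q_0$; since $1-U$ is again uniform, the standard quantile-transform lemma yields $f(U)\overset{\mathrm{d}}{=}L(Z)$. In the finite case this is completely explicit: by \eqref{eq:betting} the set $\{p:f(p)=l_k\}$ is an interval of length $q_k$, so $\P(f(U)=l_k)=q_k=Q_0(\{z:L(z)=l_k\})=\P(L(Z)=l_k)$. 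As a sanity check, integrating gives $\E f(U)=\E_{Q_0}L=\int f_1\dd m=1$, confirming that the CTM is a genuine test martingale.

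With the marginal identity in hand, the factors $f(p_i)$ (i.i.d.\ under randomness) and $L(z_i)$ (i.i.d.\ under $Q_0^{\infty}$) share a common marginal law, hence have the same joint law at every finite horizon and therefore the same law as stochastic processes; the two products coincide in distribution. To settle the clause ``any asymptotically optimal CTM'', I would observe that two asymptotically optimal betting functions can disagree only at those $p$ for which the defining inequalities leave $f(p)$ ambiguous, namely the at most countably many levels at which $\bar F$ is flat (the gaps in the support of $L$). This is a Lebesgue-null set, so all asymptotically optimal $f$ agree for almost every $p$ and induce the same law of $f(U)$; the identity is thus insensitive to the choice.

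The computational content is light, and the real work is conceptual. The step I expect to need the most care is the marginal identity $f(U)\overset{\mathrm{d}}{=}L(Z)$ in full generality, where $\bar F$ may simultaneously have jumps (atoms of $L$) and flat parts (gaps), together with the verification that the defining inequalities of an asymptotically optimal betting function pin $f$ down Lebesgue-almost everywhere. Everything else---the i.i.d.\ uniformity of the conformal p-values and the passage from equal marginals to equal process laws---is either a cited property of conformal prediction or routine.
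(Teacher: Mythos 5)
Your proof is correct and follows essentially the same route as the paper's: both decompose the martingales into i.i.d.\ multiplicative increments (using independence of the observations for the LRM and the i.i.d.\ uniformity of conformal p-values under randomness for the CTM) and then reduce the claim to the one-step identity $f(U)\overset{\mathrm{d}}{=}L(Z)$ via the quantile-transform lemma, which the paper cites as \cite[Lemma A.23]{Follmer/Schied:2016}. Your additional observation that all asymptotically optimal betting functions agree off a Lebesgue-null set is a worthwhile explicit justification of the word ``any,'' which the paper leaves implicit.
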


\begin{proof}
  Let $S$ be the LRM and $\bar S$ be an asymptotically optimal CTM
  based on $(Q_0,Q_1)$.
  The relative increments $S_n/S_{n-1}$, $n=1,2,\dots$,
  are independent because the observations are independent,
  and the relative increments $\bar S_n/\bar S_{n-1}$, $n=1,2,\dots$,
  are independent because the conformal p-values are independent
  \cite[Theorem~11.1]{Vovk/etal:2022book}.
  Therefore, it suffices to prove that, for a fixed $n$,
  $S_n/S_{n-1}$ and $\bar S_n/\bar S_{n-1}$ have the same distribution
  (the former under $Q_0^{\infty}$ and the latter under randomness).
  The former's distribution is the pushforward of $Q_0$
  by the likelihood ratio mapping \eqref{eq:LR}.
  The latter's distribution is identical
  by definition and \cite[Lemma~A.23]{Follmer/Schied:2016}.
\end{proof}

According to Proposition~\ref{prop:validity},
the conformal CUSUM procedure satisfies the same properties of validity
as the standard CUSUM procedure but under radically weaker assumptions:
instead of assuming $Q^{\infty}$, we just assume randomness.
In particular, we can use the whole arsenal of the existing results
in the standard theory of change detection to find a suitable threshold $c$
for the CUSUM procedure.
(See the mini-review at the end of Sect.~\ref{sec:CUSUM}.)
Alternatively, we can use computer simulations, as in \cite[Sect.~8.4.3]{Vovk/etal:2022book}.

Proposition~\ref{prop:validity} demonstrates a significant advantage
of the conformal CUSUM procedure over the conformal CUSUM e-procedure;
it is unlikely that anything similar to Proposition~\ref{prop:validity}
can be proved for the conformal CUSUM e-procedure,
since we know the distribution of the conformal p-values (it is uniform in $[0,1]^{\infty}$)
but not of conformal e-values.

\section{Computing the CAO betting function in popular special cases}
\label{sec:special}

Now we take the observation space to be the real line $\R$, $\mathbf{Z}=\R$,
and take $m$ to be Lebesgue measure;
we still assume that the pre-change and post-change distributions
are absolutely continuous w.r.\ to $m$ with positive probability densities.
As before, $f_0$ is the probability density of the pre-change distribution $Q_0$
and $f_1$ is the probability density for the post-change distribution $Q_1$.
The likelihood ratio is defined by \eqref{eq:LR}, where now $z\in\R$.

The special cases for which we will compute the CAO betting function
in this section will all involve the Gaussian statistical model.
First let us assume that the pre-change distribution is $Q_0:=N(0,1)$
and the post-change distribution is $Q_1:=N(\mu,1)$.
For concreteness, let us assume $\mu>0$
(this does not really restrict generality because of symmetry).
The likelihood ratio is
\[
  L(z)
  =
  \exp(\mu z-\mu^2/2).
\]
Solving the inequality $L(z)\ge l$ for $l\in(0,\infty)$,
we obtain the optimal Neyman--Pearson region
\begin{equation}\label{eq:mean-region}
  z
  \ge
  \frac{\ln l}{\mu} + \frac{\mu}{2}.
\end{equation}
Its probability under $Q_0$
(i.e., the \emph{type I error probability} of the corresponding statistical test)
is
\begin{equation*}
  p
  =
  \P
  \left(
    Z
    \ge
    \frac{\ln l}{\mu} + \frac{\mu}{2}
  \right)
  =
  1
  -
  \Phi
  \left(
    \frac{\ln l}{\mu} + \frac{\mu}{2}
  \right),
\end{equation*}
where $Z\sim N(0,1)$ and $\Phi$ stands for the distribution function of $N(0,1)$.
Expressing $l$ in terms of $p$,
we obtain the betting function
\begin{equation}\label{eq:mean-opt-p-s}
  f(p)
  =
  l
  =
  \exp
  \left(
    \mu\Phi^{-1}(1-p)
    -
    \mu^2/2
  \right),
\end{equation}
$\Phi^{-1}$ being the inverse function of $\Phi$
(i.e., $\Phi^{-1}$ is the quantile function of $N(0,1)$).
If $\mu<0$, we should replace the ``$\ge$'' in \eqref{eq:mean-region} by ``$\le$'',
which leads to
\begin{equation}\label{eq:mean-opt-n-s}
  f(p)
  =
  l
  =
  \exp
  \left(
    \mu \Phi^{-1}(p)
    -
    \mu^2/2
  \right)
\end{equation}
in place of \eqref{eq:mean-opt-p-s}.
(We can also see that \eqref{eq:mean-opt-p-s} implies \eqref{eq:mean-opt-n-s}
by the symmetry between $p$ and $1-p$.)

Next we consider the case where the pre-change distribution is $N(0,1)$
and the post-change distribution is $N(0,\sigma^2)$.
The likelihood ratio is
\[
  L(z)
  =
  \frac{1}{\sigma}
  \exp
  \left(
    (1-1/\sigma^2) z^2/2
  \right).
\]
Let us first assume $\sigma>1$ (the observations are becoming
more spread out after the changepoint).
Then $L(z)$ ranges over $[1/\sigma,\infty)$.
Solving $L(z)\ge l$ for $l\ge1/\sigma$ gives the optimal Neyman--Pearson region
\begin{equation}\label{eq:sigma-region}
  \left(
    -\infty,
    -\sqrt{\frac{2\ln(l\sigma)}{1-1/\sigma^2}}
  \right]
  \cup
  \left[
    \sqrt{\frac{2\ln(l\sigma)}{1-1/\sigma^2}},
    \infty
  \right).
\end{equation}
The type I error probability is
\begin{multline}\label{eq:p_0-sigma}
  p
  =
  \P
  \left(
    Z
    \in
    \left(
      -\infty,
      -\sqrt{\frac{2\ln(l\sigma)}{1-1/\sigma^2}}
    \right]
    \cup
    \left[
      \sqrt{\frac{2\ln(l\sigma)}{1-1/\sigma^2}},
      \infty
    \right)
  \right)\\
  =
  2N
  \left(
    -\sqrt{\frac{2\ln(l\sigma)}{1-1/\sigma^2}}
  \right).
\end{multline}
For the change $N(0,1)\to N(0,\sigma^2)$ for $\sigma>1$,
\eqref{eq:p_0-sigma} gives
\begin{equation}\label{eq:sigma-opt-p-s}
  f(p)
  =
  l
  =
  \frac{1}{\sigma}
  \exp
  \left(
    \frac{1-1/\sigma^2}{2}
    \left(
      \Phi^{-1}(p/2)
    \right)^2
  \right)
  \in
  \left[
    1/\sigma,\infty
  \right).
\end{equation}
Similarly, we get
\begin{equation}\label{eq:sigma-opt-n-s}
  f(p)
  =
  l
  =
  \frac{1}{\sigma}
  \exp
  \left(
    \frac{1-1/\sigma^2}{2}
    \left(
      \Phi^{-1}((1-p)/2)
    \right)^2
  \right)
  \in
  \left(
    0,1/\sigma
  \right]
\end{equation}
for $\sigma<1$;
this is obtained by replacing the optimal Neyman-Pearson region~\eqref{eq:sigma-region}
by its complement.

There is a convenient equivalent definition of the CAO betting function
that sometimes leads to nicer and more intuitive mathematical expressions.
For each threshold $l\in(0,\infty)$,
find, alongside the type I error probability
\begin{equation}\label{eq:p_0}
  p_0(l)
  :=
  \int_{z:L(z)\ge l}
  f_0(z)
  \d z
  \in[0,1],
\end{equation}
the probability
\begin{equation}\label{eq:p_1}
  p_1(l)
  :=
  \int_{z:L(z)\ge l}
  f_1(z)
  \d z
  \in[0,1]
\end{equation}
(this is the \emph{power}, or 1 minus the \emph{type II error probability}).
Let us assume, for simplicity,
that the functions $p_0$ and $p_1$ defined by \eqref{eq:p_0} and \eqref{eq:p_1}
are continuous and strictly increasing over some interval.
(These conditions are satisfied in the examples considered in this section.)
The \emph{Neyman--Pearson ROC curve} $R$ for the pair $(Q_0,Q_1)$
(with $Q_0$ interpreted as null hypothesis and $Q_1$ as alternative)
is defined to be $p_1$ as function of $p_0$:
\[
  R(p)
  :=
  \sup
  \{
    p_1(l):
    p_0(l)=p
  \}.
\]
This definition is motivated, of course,
by the Neyman--Pearson lemma \cite[Sect.~3.2]{Lehmann/Romano:2022}.

\begin{proposition}\label{prop:opt}
  The Neyman--Pearson ROC curve $R$ is differentiable,
  and the CAO betting function $f$ is equal to its derivative,
  $f=R'$.
\end{proposition}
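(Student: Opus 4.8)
The plan is to parametrize the Neyman--Pearson ROC curve by the likelihood-ratio threshold $l$ and to compute the slope $\d p_1 / \d p_0$ directly, showing that it equals $l$.

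First I would unfold the definition of $R$. Under the stated assumption, $l\mapsto p_0(l)$ and $l\mapsto p_1(l)$ are continuous and strictly monotone on the relevant interval (in fact strictly \emph{decreasing}, since enlarging $l$ shrinks the region $\{z:L(z)\ge l\}$), so $p_0$ is invertible there. Hence the supremum in the definition of $R$ is attained at the unique $l$ with $p_0(l)=p$, and
\[
  R(p)=p_1\bigl(p_0^{-1}(p)\bigr)
\]
is a genuine composition of two continuous strictly monotone maps, so $R$ is itself continuous and strictly monotone.

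The crux is a change-of-measure computation. Since $f_1=Lf_0$ by \eqref{eq:LR}, the power \eqref{eq:p_1} can be rewritten as
\[
  p_1(l)=\int_{z:L(z)\ge l}L(z)f_0(z)\dd z=\E_{Q_0}\bigl(L\,\mathbf{1}_{\{L\ge l\}}\bigr).
\]
Writing $\mu$ for the distribution of the random variable $L$ under $Q_0$, this is $p_1(l)=\int_{[l,\infty)}t\,\mu(\d t)$, while $p_0(l)=\mu([l,\infty))$, cf.\ \eqref{eq:p_0}. When $\mu$ has a density $\phi$ on the interval in question, differentiating in $l$ gives
\[
  p_0'(l)=-\phi(l),
  \qquad
  p_1'(l)=-l\,\phi(l).
\]
Because $p_0'(l)\ne0$, the inverse-function theorem makes $p_0^{-1}$ differentiable, so $R=p_1\circ p_0^{-1}$ is differentiable, and the chain rule yields
\[
  R'(p)=\frac{p_1'(l)}{p_0'(l)}=\frac{-l\,\phi(l)}{-\phi(l)}=l,
  \qquad l=p_0^{-1}(p).
\]
It then remains to identify this threshold with the CAO betting function: $f$ is the upper-quantile inverse of the survival function $\bar F(t)=Q_0(\{z:L(z)>t\})$, and under the continuity assumption $L$ has no atoms, so $\bar F(l)=p_0(l)$; inverting the continuous strictly decreasing $\bar F$ gives $f(p)=p_0^{-1}(p)=l$, whence $R'(p)=f(p)$.

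The main obstacle is justifying the differentiation relation $p_1'(l)=l\,p_0'(l)$, the precise form of the intuitive fact that on the infinitesimal shell $\{l-\d l\le L<l\}$ swept out as the threshold decreases one has $L\approx l$, so the added power $\int_{\text{shell}}Lf_0$ equals $l$ times the added size $\int_{\text{shell}}f_0$. With the assumed continuity and strict monotonicity this reduces to the existence of the density $\phi$ together with the fundamental theorem of calculus (as in the Gaussian examples, where $L$ is a smooth monotone transform of $Z\sim N(0,1)$ and $\phi$ is explicit); in full generality one would instead argue at the level of the Lebesgue--Stieltjes identity $\d p_1=l\,\d p_0$ and invoke Lebesgue differentiation. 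A secondary point to verify is that differentiability of $R$ genuinely requires differentiability of $p_0$ and $p_1$, which is slightly stronger than bare continuity plus strict monotonicity but holds in all the cases of interest here.
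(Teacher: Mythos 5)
Your proof is correct and follows essentially the same route as the paper's: both rest on the change of measure $f_1=Lf_0$, which turns the increment ratio $\bigl(p_1(l+\Delta l)-p_1(l)\bigr)/\bigl(p_0(l+\Delta l)-p_0(l)\bigr)$ into an average of $L$ over the shell $\{z:L(z)\in[l,l+\Delta l]\}$ and hence squeezes it to $l$. The only differences are presentational: the paper works directly with that squeeze (so it never needs the law of $L$ under $Q_0$ to have a density $\phi$, an extra hypothesis you introduce), while you additionally spell out the identification $f=p_0^{-1}$ that the paper leaves implicit.
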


\noindent
Another way to express Proposition~\ref{prop:opt} is:
the CAO betting function $f$ is equal to the minus derivative
of the type II error probability $1-p_1$
expressed as function of the type I error probability $p_0$.

\begin{proof}[Proof of Proposition~\ref{prop:opt}]
  Under our assumptions, as $\Delta l\to0$,
  \begin{align*}
    \frac{\d p_1}{\d p_0}(l)
    &\sim
    \frac{p_1(l+\Delta l)-p_1(l)}{p_0(l+\Delta l)-p_0(l)}\\
    &=
      \int_{z:L(z)\in[l,l+\Delta l]}
      L(z) f_0(z)
      \dd z
      \Big/\!\!  
      \int_{z:L(z)\in[l,l+\Delta l]}
      f_0(z)
      \dd z
    \sim l.
    \qedhere
  \end{align*}
\end{proof}

Proposition~\ref{prop:opt} gives more intuitive expressions
for the CAO betting functions discussed so far.
Start, again, from the change $N(0,1)\to N(\mu,1)$ for $\mu>0$.
The probability of \eqref{eq:mean-region} under $Q_1$ (i.e., the power) is
\[
  p^*
  =
  \P
  \left(
    Z+\mu
    \ge
    \frac{\ln l}{\mu} + \frac{\mu}{2}
  \right)
  =
  1
  -
  \Phi
  \left(
    \frac{\ln l}{\mu} - \frac{\mu}{2}
  \right).
\]
(So that now we are writing $p$ for $p_0$ and $p^*$ for $p_1$.)
Expressing $p^*$ via $p$, we obtain the Neyman--Pearson ROC curve
\[
  R(p)
  =
  p^*
  =
  1-\Phi(\Phi^{-1}(1-p)-\mu).
\]
Differentiating this function gives the optimal betting function
\begin{equation}\label{eq:mean-opt-p}
  f(p)
  =
  R'(p)
  =
  \frac
  {\phi(\Phi^{-1}(1-p)-\mu)}
  {\phi(\Phi^{-1}(1-p))},
\end{equation}
where $\phi$ is the probability density of $N(0,1)$.
It is easy to check that the ex\-pres\-sion~\eqref{eq:mean-opt-p}
is equivalent to our previous expression~\eqref{eq:mean-opt-p-s}.
If $\mu<0$, replacing the ``$\ge$'' in \eqref{eq:mean-region} by ``$\le$''
gives
\begin{equation}\label{eq:mean-opt-n}
  f(p)
  =
  R'(p)
  =
  \frac
  {\phi(\Phi^{-1}(p)-\mu)}
  {\phi(\Phi^{-1}(p))}
\end{equation}
in place of \eqref{eq:mean-opt-p};
the last expression is again equivalent to~\eqref{eq:mean-opt-n-s}.

We can interpret the betting functions \eqref{eq:mean-opt-p} and \eqref{eq:mean-opt-n}
as likelihood ratios.
For example, \eqref{eq:mean-opt-p} is the likelihood ratio
of the alternative hypothesis $Q_1$ to the null hypothesis $Q_0$
after transforming the observed p-value $p$ to the Gaussian scale $\Phi^{-1}(1-p)$.

In the case $N(0,1)\to N(0,\sigma^2)$ with $\sigma>1$,
the power is
\begin{multline*}
  p^*
  =
  \P
  \left(
    \sigma Z
    \in
    \left(
      -\infty,
      -\sqrt{\frac{2\ln(l\sigma)}{1-1/\sigma^2}}
    \right]
    \cup
    \left[
      \sqrt{\frac{2\ln(l\sigma)}{1-1/\sigma^2}},
      \infty
    \right)
  \right)\\
  =
  2N
  \left(
    -\frac{1}{\sigma}\sqrt{\frac{2\ln(l\sigma)}{1-1/\sigma^2}}
  \right).
\end{multline*}
Therefore, the Neyman--Pearson ROC curve is
\[
  R(p)
  =
  p^*
  =
  2N
  \left(
    \frac{1}{\sigma}
    \Phi^{-1}(p/2)
  \right),
\]
and differentiation gives the betting function
\begin{equation}\label{eq:sigma-opt-p}
  f(p)
  =
  R'(p)
  =
  \frac{\phi\left(\frac{1}{\sigma}\Phi^{-1}(p/2)\right)}{\sigma\phi(\Phi^{-1}(p/2))}.
\end{equation}
The formula for the betting function for the case $\sigma<1$ is
\begin{equation}\label{eq:sigma-opt-n}
  f(p)
  =
  R'(p)
  =
  \frac{\phi\left(\frac{1}{\sigma}\Phi^{-1}((1-p)/2)\right)}{\sigma\phi(\Phi^{-1}((1-p)/2))};
\end{equation}
as usual,
it is obtained by replacing~\eqref{eq:sigma-region} by its complement.
Of course, \eqref{eq:sigma-opt-p} and \eqref{eq:sigma-opt-n}
are equivalent to \eqref{eq:sigma-opt-p-s} and \eqref{eq:sigma-opt-n-s},
but the former look more natural.

\begin{remark}
  The conformal test martingales based on the CAO betting functions
  developed in this section
  can be integrated over a probability measure on the parameter
  (the post-change mean $\mu$ or the post-change standard deviation $\sigma$).
  This allows us to achieve good performance for wider ranges of post-change distributions.
\end{remark}

\begin{remark}
  We can't easily combine the two Gaussian cases that we have considered,
  $N(0,1)\to N(\mu,1)$ and $N(0,1)\to N(0,\sigma^2)$,
  into the case of the pre-change distribution being $N(0,1)$
  and the post-change distribution being $N(\mu,\sigma^2)$.
  The general case of a change in both location and scale parameters
  does not appear to admit a simple closed-form solution.
\end{remark}

\section{Efficiency and validity: Experimental results}
\label{sec:experiments}

In this section we will report some experimental results for conformal CUSUM procedures
and, for comparison, conformal CUSUM e-procedures considered in \cite{Vovk/etal:arXiv2006v2}.
The latter are based on universal e-values
\begin{equation}\label{eq:E-from-L}
  E_n
  :=
  \frac{L_n}{\frac1n(L_1+\dots+L_n)},
\end{equation}
where $L_n:=L(z_n)$,
and no further tuning to $Q_0$ and $Q_1$
(like choosing the betting functions in the case of conformal testing)
is needed.
We omit the details referring the reader to \cite{Vovk/etal:arXiv2006v2}.

\begin{figure}[bt]
  \begin{center}
    \includegraphics[width=0.48\textwidth]{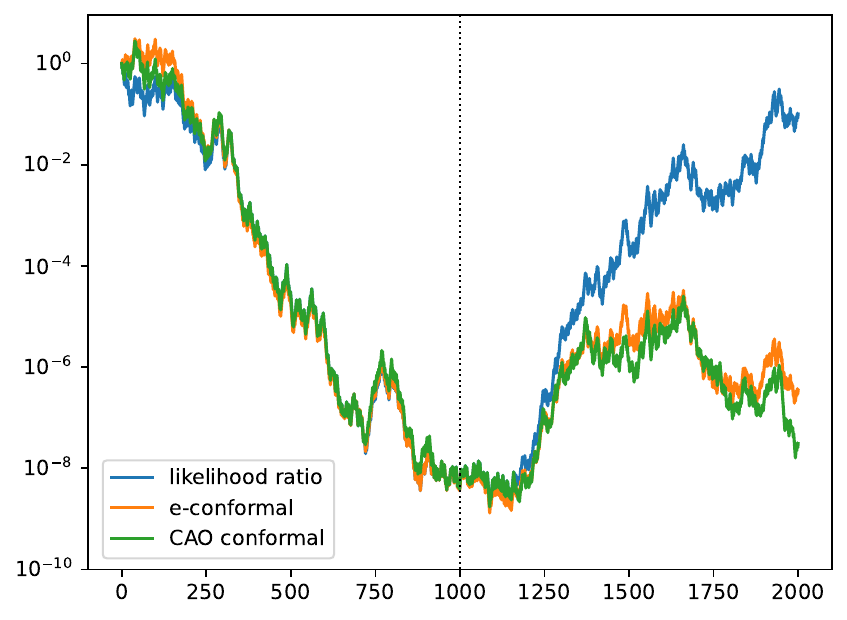}
    \includegraphics[width=0.48\textwidth]{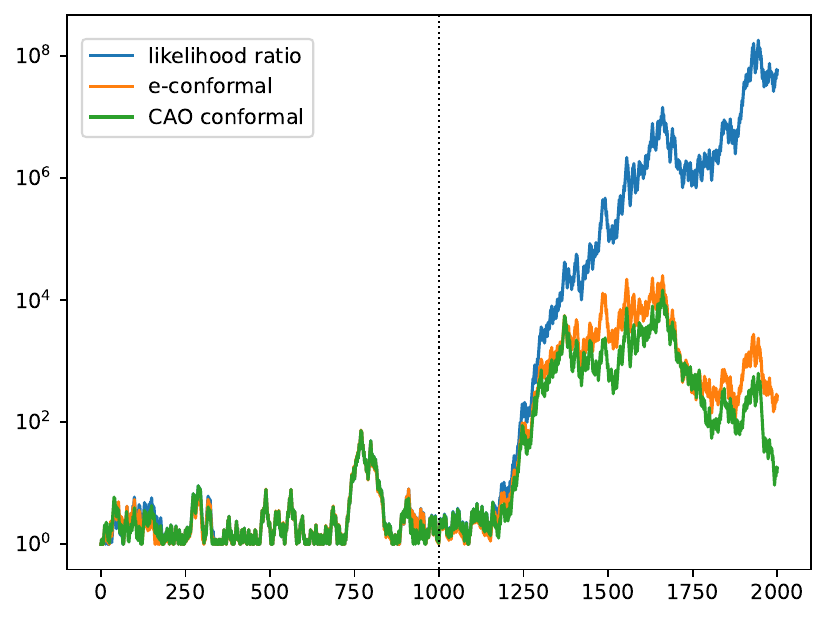}
  \end{center}
  \caption{Left panel: the likelihood ratio martingale, conformal e\-/pseudomartingale,
    and conformal test martingale, as described in text, in the Bernoulli case
    (with the parameter equal to $0.5$ pre-change and $0.6$ post-change).
    Right panel: the corresponding CUSUM statistics.}
  \label{fig:Bernoulli}
\end{figure}

Experimental results for Bernoulli observations are given in Fig.~\ref{fig:Bernoulli}.
The first $N_0:=1000$ observations $z_1,\dots,z_{N_0}$ are generated
from the Bernoulli distribution with parameter $0.5$
and another $N_1:=1000$ observations $z_{N_0+1},\dots,z_{N_0+N_1}$
from the Bernoulli distribution with parameter $0.6$.
The changepoint 1000 is shown as a dashed vertical line.
The left panel of Fig.~\ref{fig:Bernoulli} shows three paths:
\begin{itemize}
\item
  the likelihood ratio martingale $L_1\dots L_n$, $n=0,1,\dots,N_0+N_1$, in blue;
\item
  the conformal e-pseudomartingale $E_1\dots E_n$, $n=0,1,\dots,N_0+N_1$, in orange,
  with the conformal e-values defined as the normalized $L_n$:
  see \eqref{eq:E-from-L};
\item
  the CAO conformal test martingale in green;
  as the green line was drawn after the orange line and the two lines are very close,
  the green line often obscures the orange one.
\end{itemize}
The right panel shows the corresponding \emph{CUSUM statistics} $S^*_n$ of these processes
defined as
\[
  S^*_n
  :=
  S_n/\min(S_0,\dots,S_{n-1}),
\]
$S_n$ being the original process.
The CAO betting function is,
according to the definition above,
\[
  f(p)
  :=
  \begin{cases}
    1.2 & \text{if $p\le0.5$}\\
    0.8 & \text{otherwise}.
  \end{cases}
\]

In Fig.~\ref{fig:Bernoulli} we can clearly see a phenomenon that we called \emph{decay}
in \cite[Sect.~8.4]{Vovk/etal:2022book}:
when testing the assumption of randomness (the orange and green lines),
the new distribution $Q_1$ will gradually become the ``new normal'' after the changepoint,
and the growth of the CTM will slow down or stop.
Of course, there is no decay for the blue line.
Decay is inevitable when testing randomness,
and by itself it does not indicate any inefficiency.

\begin{figure}[bt]
  \begin{center}
    \includegraphics[width=0.48\textwidth]{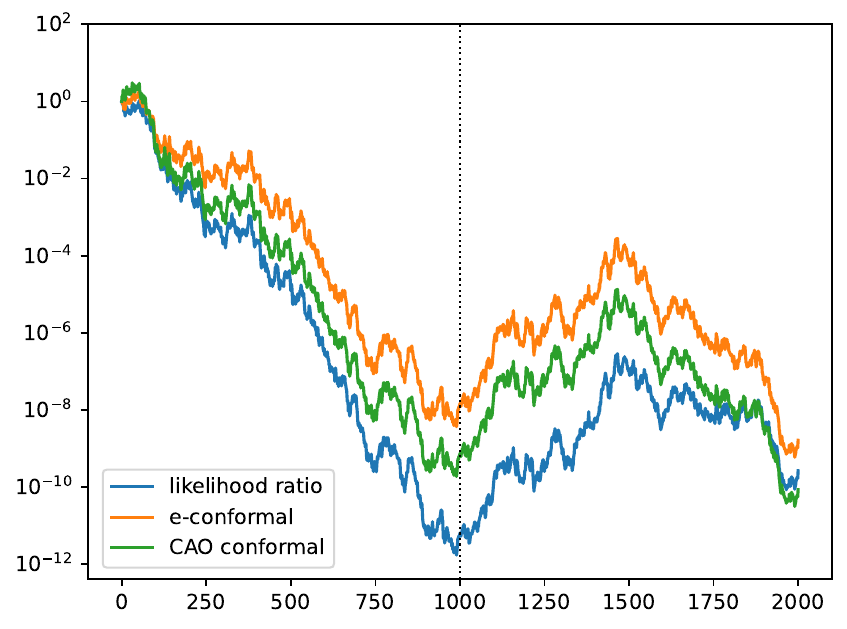}
    \includegraphics[width=0.48\textwidth]{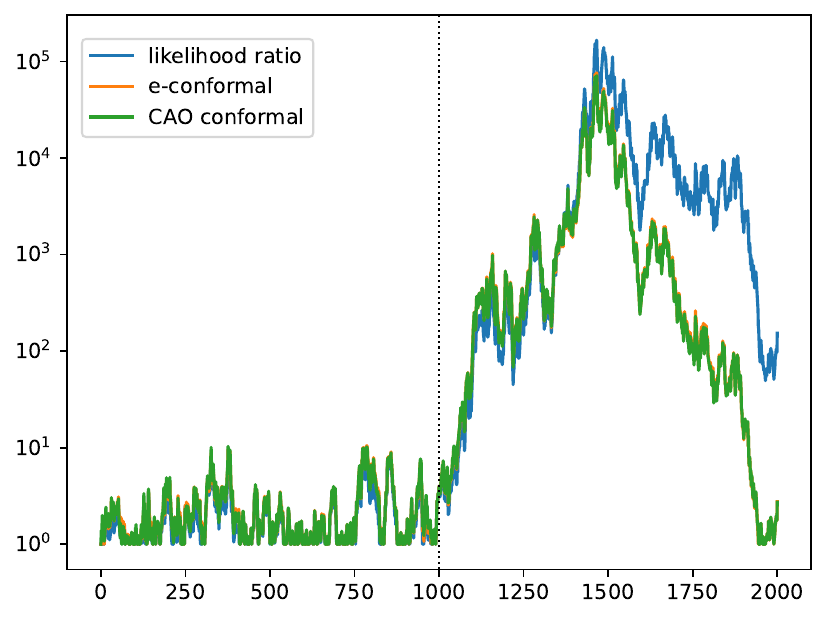}
  \end{center}
  \caption{Left panel: the three processes as in Fig.~\ref{fig:Bernoulli},
    but with a change from $N(0,1)$ to $N(0.2,1)$.
    Right panel: the corresponding CUSUM statistics.}
  \label{fig:Gauss-mean}
\end{figure}

Figure~\ref{fig:Gauss-mean} is the analogue of Fig.~\ref{fig:Bernoulli}
for a change in mean for a Gaussian distribution.
Now the first $N_0:=1000$ observations are generated from $N(0,1)$
and the other $N_1:=1000$ from $N(0.2,1)$.
The orange and green lines are remarkably close to each other in the right panel
(and this remains true even if we vary the seed of the random number generator;
for other seeds, of course, the blue line does not show any signs of decay,
which is an artefact of our default choice of ``42'' as seed).

\begin{figure}[bt]
  \begin{center}
    \includegraphics[width=0.48\textwidth]{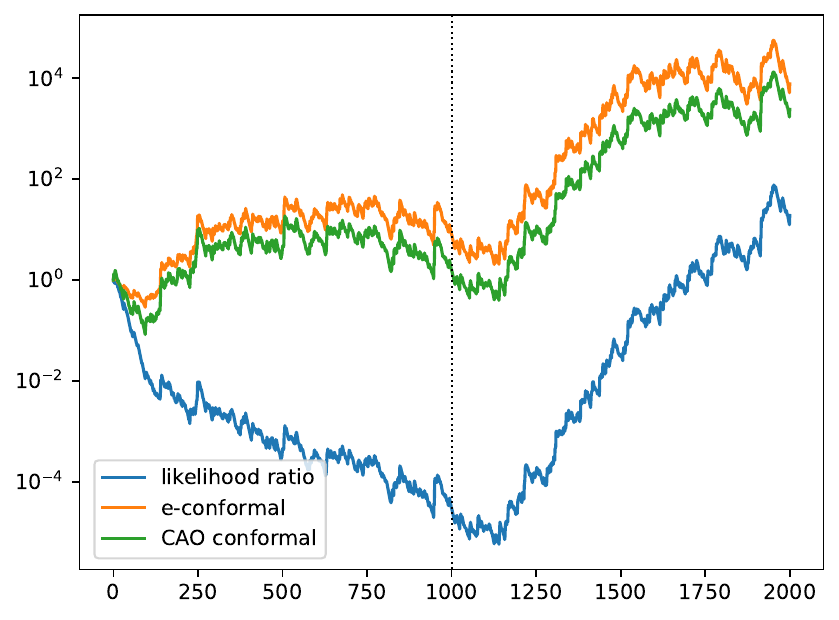}
    \includegraphics[width=0.48\textwidth]{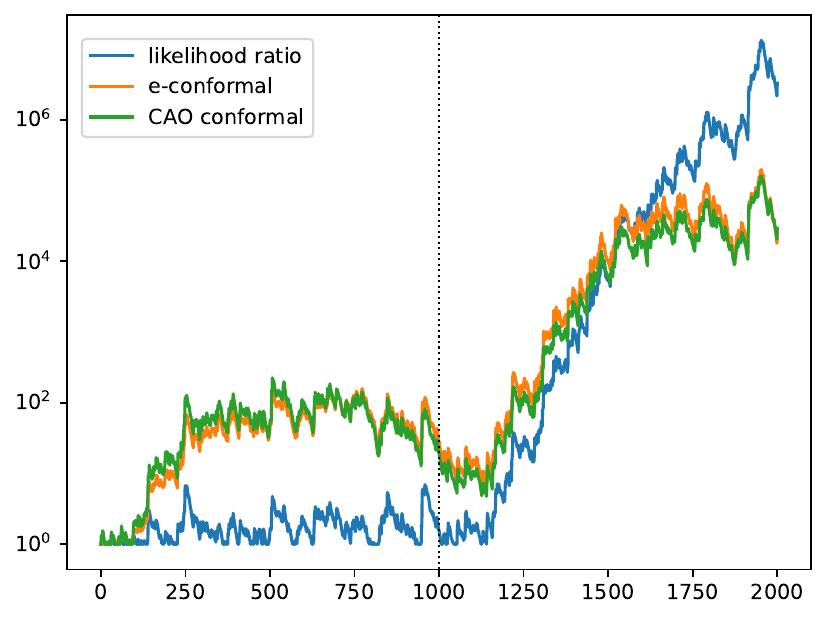}
  \end{center}
  \caption{Left panel: the three processes as in Fig.~\ref{fig:Bernoulli},
    but with a change from $N(0,1)$ to $N(0,1.1)$.
    Right panel: the corresponding CUSUM statistics.}
  \label{fig:Gauss-sigma-p}
\end{figure}

\begin{figure}[bt]
  \begin{center}
    \includegraphics[width=0.48\textwidth]{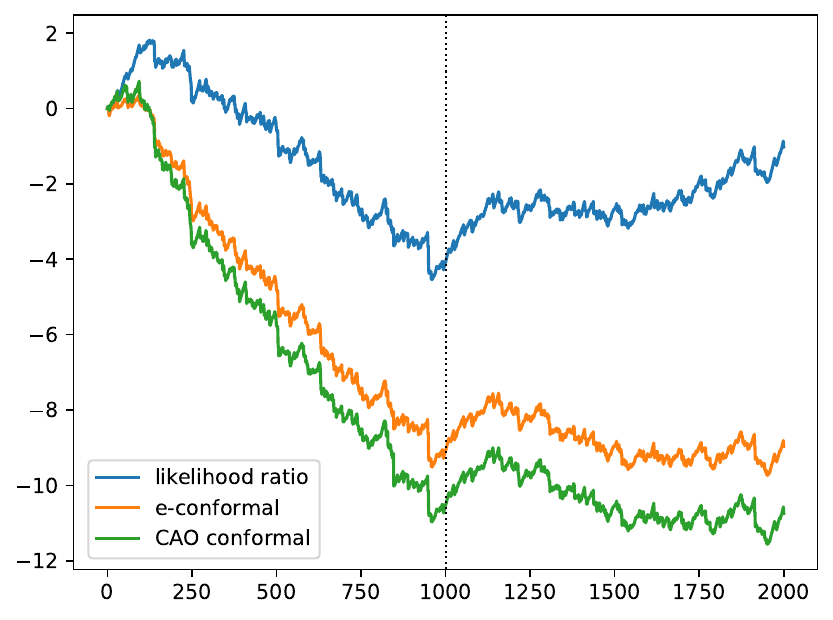}
    \includegraphics[width=0.48\textwidth]{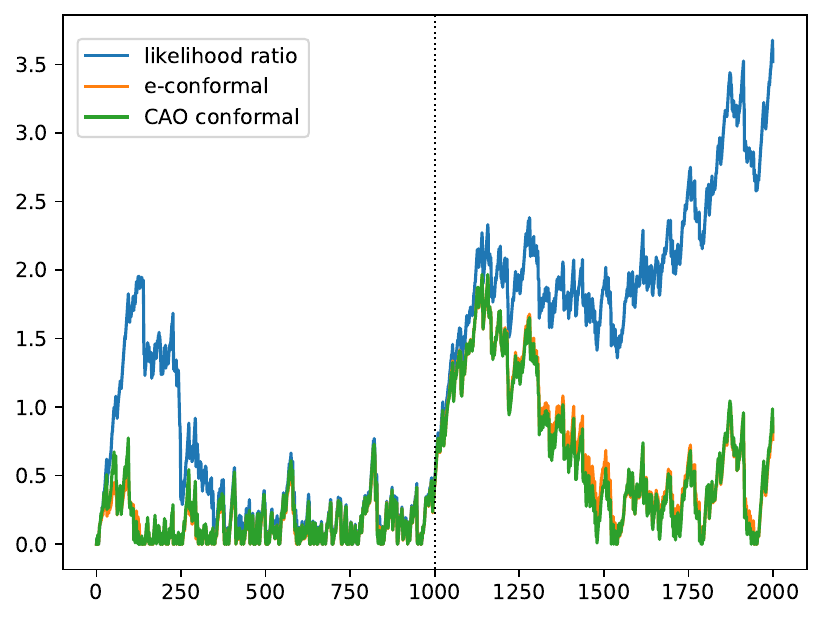}
  \end{center}
  \caption{Left panel: the three processes as in Fig.~\ref{fig:Bernoulli},
    but with a change from $N(0,1)$ to $N(0,0.9)$.
    Right panel: the corresponding CUSUM statistics.}
  \label{fig:Gauss-sigma-n}
\end{figure}

Figure~\ref{fig:Gauss-sigma-p} is the analogue of Figures~\ref{fig:Bernoulli} and~\ref{fig:Gauss-mean}
for a change from $N(0,1)$ to $N(0,1.1)$.
The orange and green lines are still fairly close in the right panel.
And finally, Fig.~\ref{fig:Gauss-sigma-n} is the analogue for a change from $N(0,1)$ to $N(0,0.9)$.
We use the expressions \eqref{eq:mean-opt-p-s}, \eqref{eq:sigma-opt-p-s}, and \eqref{eq:sigma-opt-n-s}
for the CAO betting functions
(but of course using \eqref{eq:mean-opt-p}, \eqref{eq:sigma-opt-p}, and \eqref{eq:sigma-opt-n}
would have led to the identical results).

We will finish this section by exploring the validity of the conformal CUSUM procedure.
It might appear that there is a contradiction between Proposition~\ref{prop:validity}
and the left panels of Figures~\ref{fig:Gauss-sigma-p} and~\ref{fig:Gauss-sigma-n}:
those panels seem to suggest that the stochastic behaviour of the LRM and the CAO CTM
is very different before the changepoint.
In the rest of this section we will see that,
despite the frequent remarkably different behaviour of the LRM and the CAO CTM
before the changepoint on the same dataset,
their distributions are very close to each other
(and we know from Proposition~\ref{prop:validity} that they are even identical),
and both are typically close to the distribution of the conformal e-pseudomartingale
(for which we do not have a theoretical explanation at this time).

\begin{figure}[bt]
  \begin{center}
    \includegraphics[width=0.48\textwidth]{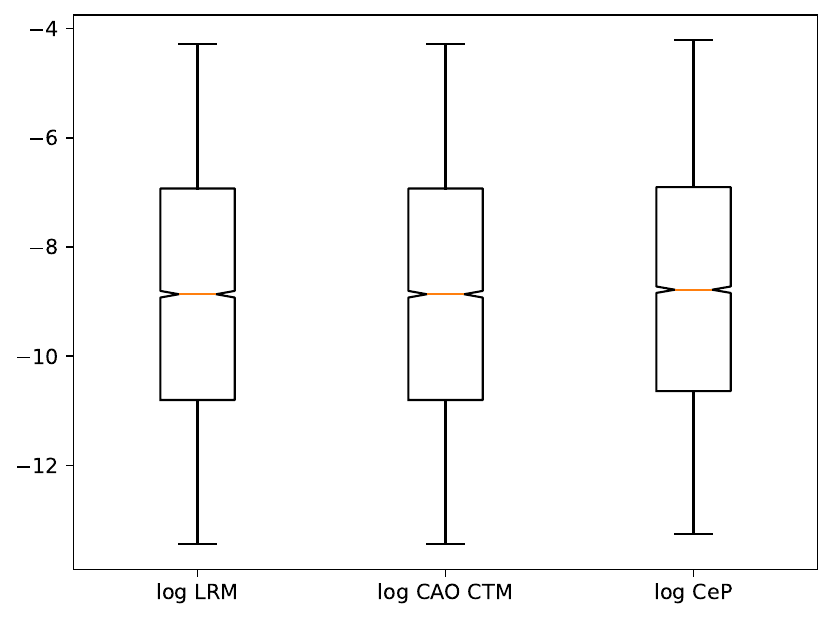}
    \includegraphics[width=0.48\textwidth]{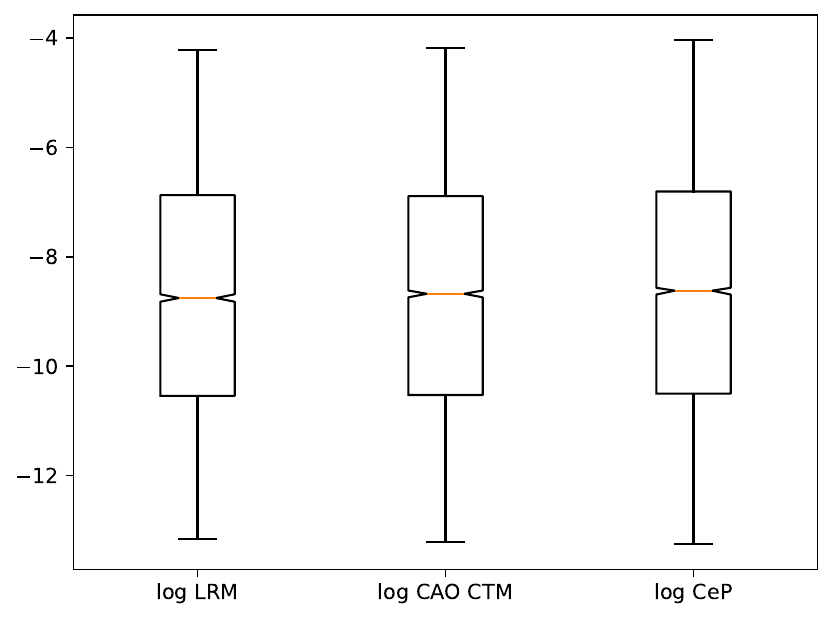}
  \end{center}
  \caption{Left panel: the validity for the Bernoulli case.
    Right panel: the validity for the Gaussian case with a change in mean.
    As described in text, with 10,000 simulations.}
  \label{fig:validity-0-1}
\end{figure}

Figure~\ref{fig:validity-0-1} demonstrates the closeness in two cases shown in its two panels.
We perform 10,000 simulations of 1000 observations from a pre-change distribution
computing the final values of three processes.
The boxplots in the figure show the median
(as the orange middle line with notches showing its default Matplotlib confidence limits),
the first and third quartiles (as the lower and upper sides of the boxes),
and the 5\% and 95\% quantiles (as the whiskers)
of the decimal logarithms of the three processes.
The left panel is for the Bernoulli case with the same parameters as before
($0.5$ pre-change and $0.6$ post-change),
and the right panel is for the Gaussian case with a change in mean with the same parameters
($N(0,1)$ pre-change and $N(0.2,1)$ post-change).
There is no changepoint, and we only generate $N_0:=1000$ observations
from the pre-change distributions.
The boxplots in each panel are given in this order:
the LRM, the CAO CTM, and the conformal e-pseudomartingale (CeP).
All three boxplots are very close in each of the two panels.

\begin{figure}[bt]
  \begin{center}
    \includegraphics[width=0.48\textwidth]{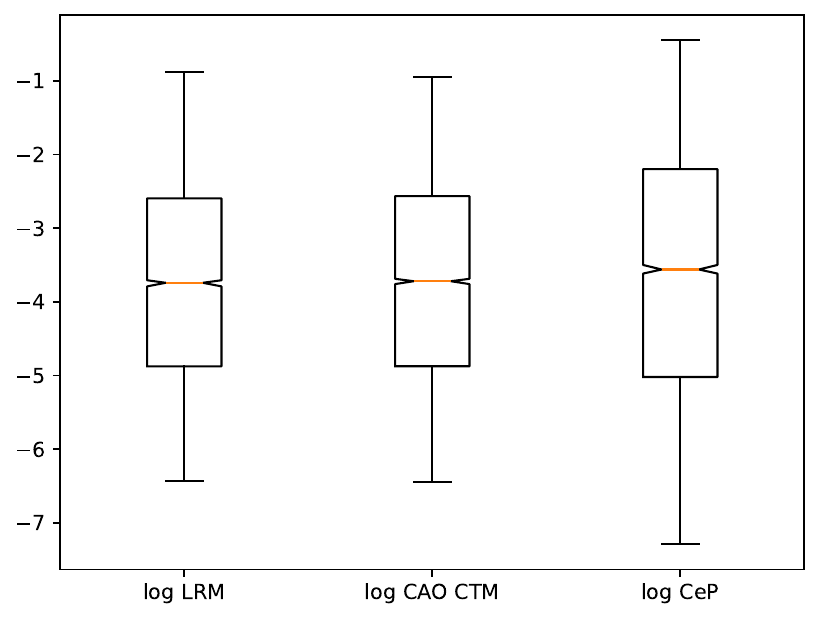}
    \includegraphics[width=0.48\textwidth]{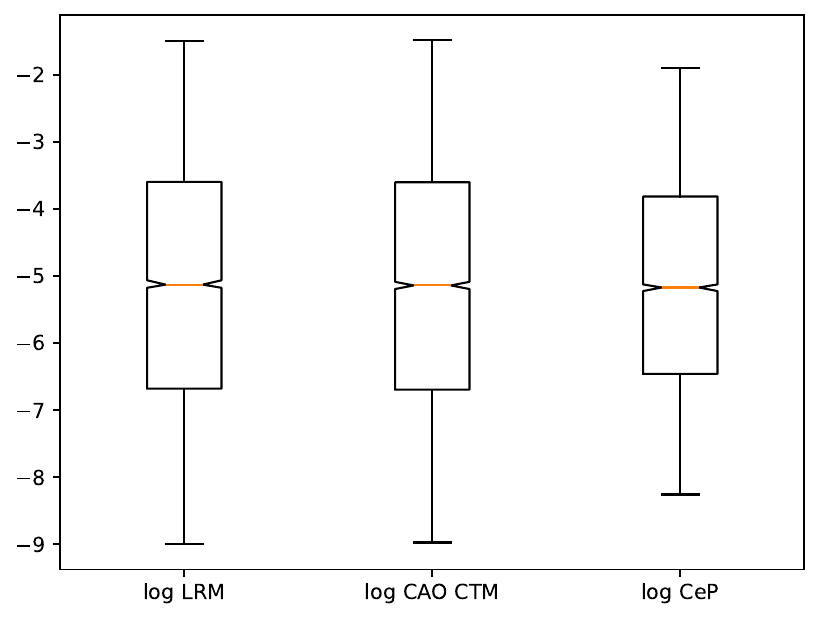}
  \end{center}
  \caption{Left panel: the validity for the Gaussian case with an upward change in variance.
    Right panel: the validity for the Gaussian case with a downward change in variance.
    The number of simulations is 10,000 in both panels; see the description in text.}
  \label{fig:validity-2p-2n}
\end{figure}

\begin{figure}[bt]
  \begin{center}
    \includegraphics[width=0.6\textwidth]{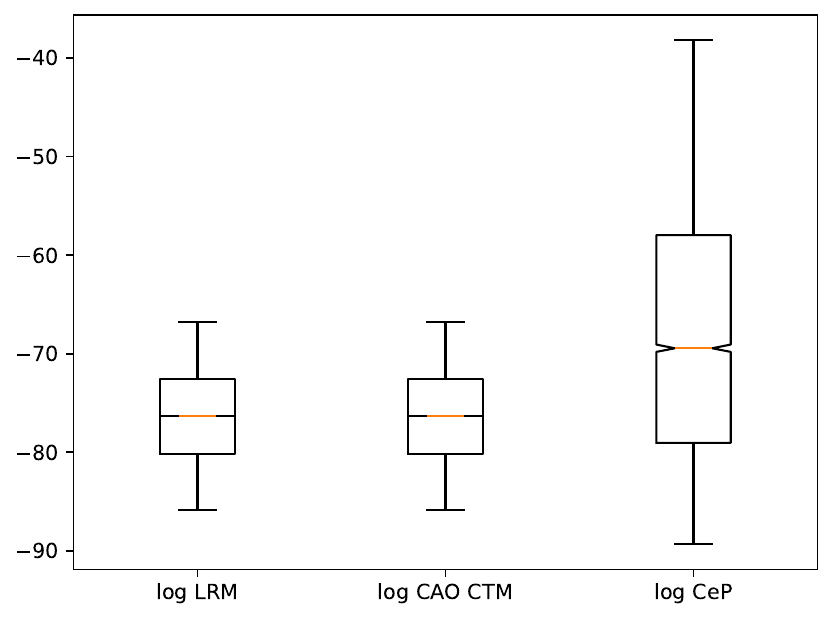}
  \end{center}
  \caption{Validity and lack thereof for a Bernoulli case, as described in text
    (with 10,000 simulations).}
  \label{fig:validity-broken}
\end{figure}

Figure~\ref{fig:validity-2p-2n} is the analogue of Fig.~\ref{fig:validity-0-1}
for the upward and downward change in the variance of the standard Gaussian distribution
generating the data, again with the same parameters as before.
While the first two boxplots are still very close to each other
(as they must be by Proposition~\ref{prop:validity}),
the boxplot for conformal e-pseudomartingale looks visibly different.
Figure~\ref{fig:validity-broken} gives a more extreme example of broken validity.
In it, the pre-change distribution is Bernoulli with parameter 0.1
while the post-change distribution is Bernoulli with parameter 0.9;
the number of observations is $N_0:=100$.

\section{Efficiency: Theoretical analysis}
\label{sec:theory}

In this section we report a simple preliminary theoretical result
about the efficiency of conformal testing in the context of change detection.
We only consider the Bernoulli case.

\begin{theorem}\label{thm:LR-vs-CT}
  Consider the Bernoulli case
  with the pre-change parameter $\theta_0$ and post-change parameter $\theta_1$.
  Let $B$ be given by
  \[
    B
    :=
    \left|
      \ln
      \frac{\theta_1(1-\theta_0)}{\theta_0(1-\theta_1)}
    \right|
  \]
  (this is a kind of distance between $\theta_0$ and $\theta_1$).
  Let $p_n$ be the $n$th conformal p-value
  based on the likelihood ratio nonconformity measure
  and $f$ be the CAO betting function.
  Then, for any $\epsilon>0$,
  the log ratio of the likelihood ratio martingale to the CAO CTM
  after the changepoint is bounded above as
  \begin{multline}\label{eq:LR-vs-CT}
    \forall n\in\{1,\dots,N_1\}:
    \ln
    \frac
    {L_{N_0+1}\dots L_{N_0+n}}
    {f(p_{N_0+1})\dots f(p_{N_0+n})}\\
    \le
    B
    \left(
      \ln\frac{2}{\epsilon}
      +
      5 N_1
      \sqrt{\frac{\ln\frac{4}{\epsilon}}{2N_0}}
      +
      \frac52
      \frac{N_1(N_1+1)}{N_0}
    \right)
  \end{multline}
  with probability at least $1-\epsilon$.
\end{theorem}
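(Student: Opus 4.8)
The plan is to exploit that, in the binary case, everything is driven by a single two\-valued betting function, so the log\-ratio in \eqref{eq:LR-vs-CT} collapses to a \emph{count} of ``mislabelled'' observations, which I would then control by a Hoeffding estimate for the pre\-change sample together with a Chernoff/supermartingale concentration bound.

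First I would set up the reduction. By the relabelling symmetry $z\leftrightarrow 1-z$ (which fixes $B$) assume $\theta_1>\theta_0$, so the two values of the likelihood ratio \eqref{eq:LR} satisfy $L(1)=\theta_1/\theta_0>L(0)=(1-\theta_1)/(1-\theta_0)$ and, by the finite\-space recipe \eqref{eq:betting}, the CAO betting function is the step function $f(p)=L(1)$ for $p\le\theta_0$ and $f(p)=L(0)$ for $p>\theta_0$ (here $\theta_0=Q_0(\{1\})$ is the $Q_0$\-mass of the larger score). Hence each summand $\ln L_{N_0+j}-\ln f(p_{N_0+j})$ equals $+B$ when a post\-change $1$ receives $p_{N_0+j}>\theta_0$, equals $-B$ when a post\-change $0$ receives $p_{N_0+j}\le\theta_0$, and is $0$ otherwise. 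Dropping the non\-positive $-B$ terms gives, for every $n$,
\[
  \ln\frac{L_{N_0+1}\dots L_{N_0+n}}{f(p_{N_0+1})\dots f(p_{N_0+n})}\le B\,C_n,\qquad C_n:=\#\{j\le n:\ z_{N_0+j}=1,\ p_{N_0+j}>\theta_0\}.
\]
Since $C_n$ is nondecreasing, it suffices to bound $C_{N_1}$ (or, via a maximal inequality, all $C_n$ simultaneously), so the whole problem becomes: show $C_{N_1}$ is at most the bracket in \eqref{eq:LR-vs-CT}.

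Next I would analyse a single mislabelling. For a post\-change $1$ the smoothed p\-value is $p_{N_0+j}=\tau_j\hat\theta_j$, where $\hat\theta_j$ is the running fraction of $1$s among $z_1,\dots,z_{N_0+j}$ and $\tau_j\sim U[0,1]$; conditionally on the past, the event $\{p_{N_0+j}>\theta_0\}$ then has probability governed by the \emph{surplus} $(\hat\theta_j-\theta_0)_+$. I would split this surplus as $\hat\theta_j-\theta_0=(\hat\theta^{\mathrm{pre}}-\theta_0)+\mathrm{drift}_j$, where $\hat\theta^{\mathrm{pre}}$ is the pre\-change empirical frequency and $\mathrm{drift}_j=O(j/N_0)$ accounts for the at most $j$ post\-change ones entering the average. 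A one\-sided Hoeffding bound for the $N_0$ pre\-change observations gives $|\hat\theta^{\mathrm{pre}}-\theta_0|\le\sqrt{\ln(4/\epsilon)/(2N_0)}$ with probability $1-\epsilon/2$, and summing the two pieces over $j=1,\dots,N_1$ produces exactly the shapes $N_1\sqrt{\ln(4/\epsilon)/(2N_0)}$ (term~2) and $N_1(N_1+1)/(2N_0)$ (term~3). To pass from these compensators to the actual count I would use that the mislabelling indicators are conditionally Bernoulli and apply the exponential supermartingale $\exp(\lambda C_n-(e^\lambda-1)\Lambda_n)$, with $\Lambda_n$ the sum of the conditional probabilities, together with Ville's inequality at $\lambda=1$; this yields $C_n\le (e-1)\Lambda_n+\ln(2/\epsilon)$ uniformly in $n$ with probability $1-\epsilon/2$, the additive $\ln(2/\epsilon)$ being term~1. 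A union bound over the Hoeffding and supermartingale events accounts for the $4/\epsilon$ and $2/\epsilon$, and leaves the prefactors $5$ and $\tfrac52$ as slack.

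The main obstacle is keeping the constants \emph{independent of} $\theta_0,\theta_1$. The exact conditional mislabelling probability is $(\hat\theta_j-\theta_0)_+/\hat\theta_j$, and the factor $1/\hat\theta_j$ is as large as $1/\theta_0$ when $\hat\theta_j$ sits just above $\theta_0$; a direct bound therefore threatens to multiply the clean surplus estimate by $1/\theta_0$, which the $\theta$\-free bracket forbids. What must be shown is that after summation and the high\-probability control of the running frequency this factor does not reintroduce a $1/\theta_0$ into the count. This is precisely the delicate point — in the small\-$\theta_0$ regime the large $1/\hat\theta_j$ and the small surplus interact in a way that a naive Hoeffding\-plus\-supermartingale argument does not obviously tame, and I would expect it to require a careful, possibly sharper, uniform control of $\hat\theta_j$ over $j\le N_1$ and a judicious choice of supermartingale compensator. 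I expect this bookkeeping, rather than the reduction or the concentration inequality themselves (both routine), to be where the real work lies and where the generous constants $5,\tfrac52$ originate.
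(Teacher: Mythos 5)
Your reduction and your concentration machinery coincide with the paper's own proof. The paper likewise reduces \eqref{eq:LR-vs-CT} to counting the post-change steps at which $L_{N_0+n}\ne f(p_{N_0+n})$ (it calls them \emph{anomalous} and keeps both types, each contributing at most $B$, where you keep only the $+B$ ones --- an equally valid and slightly cleaner choice for an upper bound); it likewise conditions on the Hoeffding event $|K_0/N_0-\theta_0|<\delta$ with $\delta=\sqrt{\ln(4/\epsilon)/(2N_0)}$; and it likewise finishes with the martingale multiplicative Chernoff bound applied to the indicator sum, choosing $\Delta=c+4$ to produce the factors $5$ and $\tfrac52$ (your $\lambda=1$ Ville argument would give $e-1$ in place of $5$, so your constants are if anything sharper). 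Up to the final step, your proposal \emph{is} the paper's proof.

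The step you flag as the ``delicate point'' and leave open is, however, exactly the step on which the paper's proof is silent, and your worry is well founded. The paper simply asserts that the probability that step $n$ is anomalous is at most $\delta+n/N_0$, i.e., it bounds the compensator by the surplus $\hat\theta-\theta_0$ alone. But the exact conditional probability of the event $\{z_{N_0+n}=1,\ p_{N_0+n}>\theta_0\}$ is $\theta_1\bigl((\hat\theta-\theta_0)/\hat\theta\bigr)_+$, where $\hat\theta$ is the current empirical frequency of $1$s, and on the Hoeffding event this is only controlled by $(\theta_1/\theta_0)(\delta+n/N_0)$; for instance, with $\theta_0=0.01$, $\theta_1=0.5$ and $K_0/N_0=\theta_0+\delta$, the step-one anomaly probability is roughly $\theta_1\delta/\theta_0$, far above $\delta+1/N_0$. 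For $\theta_1$ close to $\theta_0$ (the regime of the paper's experiments) the missing factor $\theta_1/\theta_0$ is a harmless constant, but it is not $1$, and the $\theta$-free bracket in \eqref{eq:LR-vs-CT} does not follow from the argument as written --- neither from yours nor from the paper's. So you have not failed to find a trick that the authors possess; you have identified a gap that the published proof shares. To close it one must either insert the factor $\max\{\theta_1/\theta_0,(1-\theta_1)/(1-\theta_0)\}$ into the second and third terms of the bracket, or prove a genuinely sharper control of the ratio $(\hat\theta-\theta_0)_+/\hat\theta$ than the naive one.
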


\noindent
Assuming that $\ln\frac{1}{\epsilon}$ is only moderately large,
the right-hand side of \eqref{eq:LR-vs-CT} will be moderately large
if $N_1$ has the same order of magnitude as $\sqrt{N_0}$
(this is similar to our conclusion about \cite[Proposition~8]{Vovk/etal:arXiv2006v2}).

For a proof of Theorem~\ref{thm:LR-vs-CT} see Appendix~\ref{app:proof}.

\section{Conclusion}
\label{sec:conclusion}

In this paper we started implementing the Burnaev--Wasserman programme
in the case of change detection.
We showed that the conformal CUSUM procedure has perfect validity
and investigated its efficiency both experimentally
and, in a very basic way, theoretically.

Our secondary goal was to compare conformal testing and conformal e-test\-/ing
in the context of change detection.
These are some advantages of conformal testing:
\begin{itemize}
\item
  The conformal CUSUM procedure is perfectly valid,
  as we saw in Proposition~\ref{prop:validity}.
  This allows us to find a suitable value of the parameter $c$
  for a given target value of the frequency of false alarms
  using computer simulations or existing theoretical results.
  Proposition~\ref{prop:validity} also demonstrates the perfect validity
  of the conformal version of the Shiryaev--Roberts procedure,
  which is an important competitor of CUSUM.
  Validity can be violated for the conformal CUSUM e-procedure,
  as we saw in Sect.~\ref{sec:experiments}
  (Fig.~\ref{fig:validity-broken}).
\item
  Conformal testing produces conformal test martingales,
  which are genuine martingales,
  and merging genuine martingales is easy:
  we can just average them.
  This does not work for conformal e-pseudomartingales,
  while it is important in change detection
  (e.g., when we do not have a single post-change distribution
  even as our soft model).
\end{itemize}
This short list complements the analogous list in \cite[Sect.~7]{Vovk/etal:arXiv2006v2}.
One item on that list is the need for randomization in conformal testing,
which may be regarded as its disadvantage.
Randomization is really harmful if it significantly affects the outcome of testing;
it becomes much more tolerable if (as is often the case) it is performed repeatedly
and the law of large numbers becomes applicable.

These are some open problems and directions of further research along these lines:
\begin{itemize}
\item
  We saw in Sect.~\ref{sec:experiments} that Proposition~\ref{prop:validity}
  does not hold for conformal e-testing
  (Fig.~\ref{fig:validity-broken})
  but in typical cases (Figures~\ref{fig:validity-0-1}--\ref{fig:validity-2p-2n})
  the distributions of the final values for likelihood ratio martingales
  and conformal e-pseudomartingales still look similar before the changepoint.
  Can we prove partial validity results for conformal e-testing?
\item
  The experimental results in Sect.~\ref{sec:experiments} and Theorem~\ref{thm:LR-vs-CT}
  show the closeness of the CUSUM statistics for the LRM and CAO CTM,
  but Theorem~\ref{thm:LR-vs-CT} covers only the Bernoulli case.
  It would be nice to have more general and stronger efficiency results.
\item
  Similarly, \cite{Vovk/etal:arXiv2006v2} demonstrated the closeness of the CUSUM statistics
  for the LRM and the conformal e-pseudomartingale \eqref{eq:E-from-L}.
  Those efficiency results can also be generalized and strengthened.
\item
  In Sect.~\ref{sec:experiments} we observed extreme closeness of the CUSUM statistics
  for conformal e-pseudomartingales and conformal test martingales.
  Is this a manifestation of a general efficiency result?
\end{itemize}

\appendix
\section{Proof of Theorem~\ref{thm:LR-vs-CT}}
\label{app:proof}

To compare conformal test martingales with likelihood ratio martingales,
we will use the martingale multiplicative version of the Chernoff bound
to be proved in Appendix~\ref{app:MMM-Chernoff} (Theorem~\ref{thm:MMM}).

The betting function in Theorem~\ref{thm:LR-vs-CT} is
\[
  f(p)
  :=
  \begin{cases}
    \frac{\theta_1}{\theta_0} & \text{if $p\le \theta_0$}\\
    \frac{1-\theta_1}{1-\theta_0} & \text{otherwise}.
  \end{cases}
\]
We split $\epsilon$ into two equal parts of $\epsilon/2$.
By Hoeffding's inequality (see, e.g., \cite[Sect.~A.6.3]{Vovk/etal:2022book}),
\[
  \P
  \left(
    \left|
      \frac{K_0}{N_0}
      -
      \theta_0
    \right|
    \ge
    \delta
  \right)
  \le
  2\exp(-2\delta^2N_0),
\]
where $K_0$ is the number of 1s in the first $N_0$ steps.
Solving the equation
\[
  2\exp(-2\delta^2N_0)
  =
  \frac{\epsilon}{2}
\]
gives
\begin{equation}\label{eq:delta}
  \delta
  =
  \sqrt{\frac{\ln\frac{4}{\epsilon}}{2N_0}}.
\end{equation}
Let us fix the first $N_0$ observations such that
\[
  \left|
    \frac{K_0}{N_0}
    -
    \theta_0
  \right|
  <
  \delta.
\]

We say that step $n$ after the changepoint is \emph{anomalous}
if one of the events $p_n\le\theta_0$ and $z_n=1$ happens
while the other does not happen.
Only if step $n$ after the changepoint is anomalous
can we have $L_{N_0+n}\ne f(p_{N_0+n})$,
and even if that step is anomalous we still have
\[
  \ln L_{N_0+n}-\ln f(p_{N_0+n})
  \le
  B.
\]
Therefore, our proof strategy will consist
in bounding the number of anomalous steps.

The expectation of the number of anomalous steps
up to (and including) step $N_1$ after the changepoint
is at most
\[
  \sum_{n=1}^{N_1}
  \left(
    \delta + \frac{n}{N_0}
  \right)
  =
  N_1 \delta + \frac{N_1(N_1+1)}{2N_0}.
\]
By the martingale multiplicative Chernoff bound,
\begin{multline}\label{eq:A}
  \P
  \left(
    A
    \ge
    (1+\Delta)
    \left(
      N_1 \delta + \frac{N_1(N_1+1)}{2N_0}
    \right)
  \right)\\
  \le
  \exp
  \left(
    -\frac{\Delta^2}{2+\Delta}
    \left(
      N_1 \delta + \frac{N_1(N_1+1)}{2N_0}
    \right)
  \right),
\end{multline}
where $A$ is the number of anomalous post-change steps and $\Delta$ is a positive constant.
Solving the equation
\[
  \exp
  \left(
    -\frac{\Delta^2}{2+\Delta}
    \left(
      N_1 \delta + \frac{N_1(N_1+1)}{2N_0}
    \right)
  \right)
  =
  \frac{\epsilon}{2}
\]
in $\Delta$ is equivalent to solving
\begin{equation}\label{eq:quadratic}
  \frac{\Delta^2}{2+\Delta}
  =
  c,
\end{equation}
where
\begin{equation}\label{eq:c}
  c
  :=
  \frac
  {\ln\frac{2}{\epsilon}}
  {N_1 \delta + \frac{N_1(N_1+1)}{2N_0}}.
\end{equation}
Solving the quadratic equation~\eqref{eq:quadratic} gives
\[
  \Delta
  =
  \frac{c+\sqrt{c^2+8c}}{2}
  \le
  \sqrt{c^2+8c}
  \le
  c+4.
\]
We set, conservatively, $\Delta:=c+4$.
According to \eqref{eq:A},
we have
\[
  A
  <
  (1+\Delta)
  \left(
    N_1 \delta + \frac{N_1(N_1+1)}{2N_0}
  \right)
  =
  (c+5)
  \left(
    N_1 \delta + \frac{N_1(N_1+1)}{2N_0}
  \right)
\]
with probability at least $1-\frac{\epsilon}{2}$.
It remains to plug \eqref{eq:c} and then \eqref{eq:delta} into the last inequality.

\section{Martingale multiplicative Chernoff bound}
\label{app:MMM-Chernoff}

This result was stated and proved in \cite[Corollary~6]{Kuszmaul/Qi:arXiv2102};
we have used it in Appendix~\ref{app:proof}.

\begin{theorem}\label{thm:MMM}
  Fix a filtration $(\FFF_n)$.
  Suppose $(\xi_n)$ is a (super)martingale difference
  such that $\xi_n$ take values in $\{0,1\}$,
  and set $\theta_n:=\P(\xi_n=1\mid\FFF_{n-1})$.
  Then, for any constant $s>0$,
  \begin{equation}\label{eq:S}
    S_n
    :=
    \prod_{i=1}^n
    \exp
    \left(
      s\xi_i - \theta_i(e^s-1)
    \right)
  \end{equation}
  is a test supermartingale.
  For any $\delta>0$, any $N$, and any constant $\mu$
  such that $\sum_{i=1}^N\theta_i\le\mu$ a.s.,
  \begin{equation}\label{eq:MMM}
    \P
    \left(
      \sum_{i=1}^N \xi_i
      \ge
      (1+\delta) \mu
    \right)
    \le
    \left(
      \frac{e^{\delta}}{(1+\delta)^{1+\delta}}
    \right)^{\mu}
    \le
    \exp
    \left(
      -\frac{\delta^2}{2+\delta}\mu
    \right).
  \end{equation}
\end{theorem}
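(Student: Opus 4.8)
The plan is to run the classical Cram\'er--Chernoff argument with the process $S_n$ of \eqref{eq:S} playing the role of the exponential moment generating function. The first step is to verify that $(S_n)$ is a test supermartingale. Since $S_0=1$ (the empty product) and every factor is strictly positive, it suffices to check $\E(S_n/S_{n-1}\mid\FFF_{n-1})\le1$. Because $\xi_n\in\{0,1\}$, the conditional moment generating function is
\[
  \E(e^{s\xi_n}\mid\FFF_{n-1})
  =
  (1-\theta_n)+\theta_n e^s
  =
  1+\theta_n(e^s-1),
\]
and multiplying by the $\FFF_{n-1}$\-/measurable factor $e^{-\theta_n(e^s-1)}$, then applying the elementary bound $1+x\le e^x$ with $x=\theta_n(e^s-1)$, yields $\E(S_n/S_{n-1}\mid\FFF_{n-1})\le1$, as required.

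The second step converts this into the tail bound through Markov's inequality at the fixed time $N$ (no maximal or Ville-type inequality is needed, since the probability in \eqref{eq:MMM} is evaluated at a deterministic horizon). From the supermartingale property we get $\E(S_N)\le S_0=1$, and from the explicit form
\[
  S_N
  =
  \exp\Bigl(s\textstyle\sum_{i=1}^N\xi_i-(e^s-1)\sum_{i=1}^N\theta_i\Bigr)
\]
I would lower-bound $S_N$ on the event $\{\sum\xi_i\ge(1+\delta)\mu\}$. The crucial sign check is that, since $e^s-1>0$ and $\sum\theta_i\le\mu$ almost surely, the term $-(e^s-1)\sum\theta_i$ is bounded below by $-(e^s-1)\mu$; together with $s\sum\xi_i\ge s(1+\delta)\mu$ this gives $S_N\ge\exp\bigl(\mu[s(1+\delta)-(e^s-1)]\bigr)$ on that event. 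Markov's inequality then produces
\[
  \P\Bigl(\textstyle\sum\xi_i\ge(1+\delta)\mu\Bigr)
  \le
  \exp\bigl(\mu[(e^s-1)-s(1+\delta)]\bigr).
\]

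The final step is to optimize over $s>0$. Minimizing the exponent $(e^s-1)-s(1+\delta)$ gives $s=\ln(1+\delta)$, and substituting reproduces exactly the first factor $\bigl(e^{\delta}/(1+\delta)^{1+\delta}\bigr)^{\mu}$ in \eqref{eq:MMM}. The second, cleaner estimate then reduces to the scalar inequality $(1+\delta)\ln(1+\delta)-\delta\ge\delta^2/(2+\delta)$ for $\delta\ge0$, which I would establish by checking that the difference of the two sides vanishes at $\delta=0$ and has nonnegative derivative throughout. I expect this last elementary inequality to be the only place demanding genuine care; everything before it is the standard Chernoff computation transplanted to the supermartingale setting, the one conceptual subtlety being that the one-sided hypothesis $\sum\theta_i\le\mu$ happens to point in the favourable direction precisely because the coefficient $-(e^s-1)$ is negative.
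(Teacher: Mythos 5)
Your proposal is correct and follows essentially the same route as the paper's proof: the same conditional-MGF computation with $1+x\le e^x$ for the supermartingale property, the same Chernoff/Markov step at the fixed horizon $N$ exploiting $\sum_{i=1}^N\theta_i\le\mu$ together with $e^s-1>0$, the same optimal choice $s=\ln(1+\delta)$, and a final scalar inequality that is algebraically equivalent to the paper's $\ln(1+\delta)\ge\delta/(1+\delta/2)$. No gaps.
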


\noindent
Theorem~\ref{thm:MMM} can be easily stated in terms of game-theoretic probability \cite{Shafer/Vovk:2019},
but in this paper we prefer the more familiar measure-theoretic statement.
Notice that the left-hand side of~\eqref{eq:MMM} can be replaced by
\begin{equation*}
  \P
  \left(
    \exists n\in\{1,\dots,N\}:
    \sum_{i=1}^n \xi_i
    \ge
    (1+\delta) \mu
  \right)
\end{equation*}
as $\xi_i\ge0$.
For completeness, we will give a simple proof of Theorem~\ref{thm:MMM}.

\begin{proof}[Proof of Theorem~\ref{thm:MMM}]
  The proof is standard (see, e.g., \cite{Goemans:2015} for a nice exposition).
  To check that $(S_n)$ is a supermartingale
  it suffices to notice that (assuming $(\xi_n)$ is a martingale)
  \begin{multline*}
    \E
    \left(
      \exp
      \left(
        s\xi_n - \theta_n(e^s-1)
      \right)
      \mid
      \FFF_{n-1}
    \right)
    =
    \left(
      \theta_n e^s + 1 - \theta_n
    \right)
    \exp
    \left(
      -\theta_n(e^s-1)
    \right)\\
    \le
    \exp
    \left(
      \theta_n(e^s-1)
    \right)
    \exp
    \left(
      -\theta_n(e^s-1)
    \right)
    =
    1,
  \end{multline*}
  where the inequality ``$\le$'' follows from $1+x\le e^x$.

  This is a derivation of \eqref{eq:MMM}:
  \begin{align}
    &\P
    \left(
      \sum_{i=1}^N \xi_i
      \ge
      (1+\delta)\mu
    \right)
    =
    \P
    \left(
      \exp
      \left(
        s \sum_{i=1}^N \xi_i
      \right)
      \ge
      \exp
      \left(
        s(1+\delta)\mu
      \right)
    \right)\notag\\
    &\le
    \P
    \left(
      \exp
      \left(
        s \sum_{i=1}^N \xi_i
        -
        (e^s-1) \sum_{i=1}^N\theta_i
      \right)
      \ge
      \exp
      \left(
        s(1+\delta)\mu
        -
        (e^s-1)\mu
      \right)
    \right)\label{eq:1}\\
    &\le
    \exp
    \Bigl(
      -s(1+\delta)\mu
      +
      (e^s-1)\mu
    \Bigr)
    =
    \left(
      \frac{e^{\delta}}{(1+\delta)^{1+\delta}}
    \right)^{\mu}
    \le
    \exp
    \left(
      -\frac{\delta^2}{2+\delta}\mu
    \right),
    \label{eq:2}
  \end{align}
  where $s:=\ln(1+\delta)>0$.
  The inequality ``$\le$'' in \eqref{eq:1}
  follows from the condition $\sum_{i=1}^N\theta_i\le\mu$ a.s.
  The first inequality ``$\le$'' in \eqref{eq:2} follows from \eqref{eq:S}
  being a test supermartingale.
  The equality in \eqref{eq:2} follows from $s:=\ln(1+\delta)$
  (which we obtain by minimizing the expression to the left of the ``$=$'' in $s$).
  The second inequality ``$\le$'' in \eqref{eq:2} follows from
  \begin{equation}\label{eq:ln}
    \ln(1+\delta)
    \ge
    \frac{\delta}{1+\frac{\delta}{2}},
    \quad
    \delta>0,
  \end{equation}
  applied to the logarithm of the expression on its left-hand side
  (\eqref{eq:ln} can be proved by, e.g., differentiating both sides).
\end{proof}
\end{document}